\newtheorem{theo}{Theorem}
\newtheorem{defn}[theo]{Definition}
\newtheorem{exam}[theo]{Example}
\newtheorem{lem} [theo]{Lemma}
\newtheorem{cor}[theo]{Corollary}
\newtheorem{prop}[theo]{Proposition}
\newtheorem{algor}[theo]{Algorithm}
\makeatletter \@addtoreset{equation}{section}
\newcommand{\rank}{\operatorname{rank}}
\title{A symmetric chain decomposition of $N(m,n)$ of composition}
\author{Yueming Zhong}
 \address{ School of Mathematical Sciences, Capital Normal University,
 Beijing 100048, PR China}
 \email{\texttt{zhongyueming107@gmail.com}}
\date{April 22, 2021}
\begin{document}

\begin{abstract}
A poset is called a symmetric chain decomposition if the poset can be expressed as a disjoint union of symmetric chains.
For positive integers $m$ and $n$, let $N(m,n)$ denote the set of all compositions $\alpha=(\alpha_1,\cdots,\alpha_m)$, with $0\le \alpha_i \le n$ for each $i=1,\cdots,m$. Define order $<$ as follow, $\forall \alpha,\beta \in N(m,n)$, $\beta < \alpha$ if and only if $\beta_i \le \alpha_i(i=1,\cdots,m)$ and $\sum\limits_{i=1}^{m}\beta_i <\sum\limits_{i=1}^{m}\alpha_i$. In this paper, we show that the poset $(N(m,n),<)$ can be expressed as a disjoint of symmetric chains by constructive method.
\end{abstract}

\maketitle

\noindent
\begin{small}
 \emph{Mathematic subject classification}: Primary 05A19; Secondary 05E99.
\end{small}

\noindent
\begin{small}
\emph{Keywords}: composition; chain; symmetric chain decomposition.
\end{small}

\section{Introduction}
For positive integers $m$ and $n$, let $N(m,n)$ denote the set of all compositions $\alpha=(\alpha_1,\cdots,\alpha_m)$, with $0\le \alpha_i \le n$ for each $i=1,\cdots,m$. Define order $<$ as follow, $\forall \alpha,\beta \in N(m,n)$, $\beta < \alpha$ if and only if $\beta_i \le \alpha_i(i=1,\cdots,m)$ and $\sum\limits_{i=1}^{m}\beta_i <\sum\limits_{i=1}^{m}\alpha_i$.

For $\alpha \in N(m,n)$, define: $rank(\alpha)=\sum\limits_{i=1}^{m}\alpha_i$.

Let $(P,<)$ be a graded poset of rank $n$. We say that these elements $x_1, x_2, \cdots, x_k$ of $P$ has a symmetric chain if

(1) $x_{i+1}$ cover $x_i$, $i<k-1$;

(2) $rank(x_{i+1}) + rank(x_i)= rank(P)$, where $rank(P)$ is the maximum rank.

A poset is called a symmetric chain decomposition if the poset can be expressed as a disjoint union of symmetric chains.
Many beautiful results have been derived by symmetric chain since de Bruijn introduced it in 1949, see \cite{DeBruijn}.  The elegant proof of Lubell appears in \cite{Lubell}. Symmetric chain decompositions requires each chain is rank symmetric. Its existence for $L(m,n)$ was not confirmed for $m\ge 5$.
Symmetric chain decompositions was only constructed for $m=3$ \cite{B.Lindstrom,Wen} and for $m=4$ \cite{Nathan,Wen}. A kind of chain decomposition of $L(m,n)$ for $m=3,4$ is also given in \cite{GuoceXin}. For more related symmetry chain decomposition, see \cite{Dhand-necklacePosets,Hersh-Boolean,Jordan-necklacePoset,Muhle-noncrossingPartition}.


In this paper, we show that the poset $(N(m,n),<)$ can be expressed as a disjoint of symmetric chains by constructive method, as the following Theorem \ref{theo2}.

\begin{theo}\label{theo2}
For any positive integers $m,n$, the poset $(N(m,n),<)$ has a symmetric chain decomposition into $\{C_\alpha: \alpha\in S_{m,n}\}$.
That is, i) $N(m,n)=\cup_{\alpha\in S_{m,n}} C_{\alpha}$; ii) the above is a disjoint union; iii) $\rank(S(C_\alpha))+\rank(E(C_\alpha))=mn$.
(Note: $C_\alpha$, $S(C_\alpha)$ and $E(C_\alpha)$ as defined in \ref{Def:Calpha}).
\end{theo}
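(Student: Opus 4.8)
The plan is to exhibit the chains $C_\alpha$ by a bracket-matching rule, the standard device for showing a product of chains has a symmetric chain decomposition; it also makes the rank symmetry transparent. To each $\alpha=(\alpha_1,\dots,\alpha_m)\in N(m,n)$ I associate the word $W(\alpha)$ of length $mn$ over $\{\,(\,,\,)\,\}$ obtained by concatenating, for $i=1,\dots,m$, the block $B_i(\alpha)=\underbrace{)\cdots)}_{\alpha_i}\underbrace{(\cdots(}_{\,n-\alpha_i}$. Then the number of closing brackets of $W(\alpha)$ equals $\rank(\alpha)$, and the cover relation $\alpha\lessdot\alpha+e_i$ in $N(m,n)$ (here $e_i$ is the $i$-th unit vector) corresponds to flipping the first $($ of block $B_i$ into a $)$. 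Match the brackets greedily (scan left to right, push on $($, pop on $)$ whenever the stack is nonempty); the unmatched positions, read left to right, form a pattern $\underbrace{)\cdots)}_{a}\underbrace{(\cdots(}_{b}$ with $a+b+2k=mn$, where $k$ counts the matched pairs.

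I would then define two moves. The \emph{up-move} flips the leftmost unmatched $($ to a $)$; because within each block the unmatched $($'s form a prefix of its run of $($'s (greedy matching closes innermost brackets first), this leftmost unmatched $($ is the first $($ of some block $B_i$ with $\alpha_i<n$, so the up-move is precisely $\alpha\mapsto\alpha+e_i$ -- a genuine cover in $N(m,n)$. Dually the \emph{down-move} flips the rightmost unmatched $)$ to a $($ and realizes a cover $\alpha-e_j\lessdot\alpha$. From $\alpha$, iterating down-moves until impossible yields a composition with $a=0$, and iterating up-moves until impossible yields one with $b=0$; these bound a saturated chain, which I denote $C_\gamma$ with $\gamma$ its bottom element, and I let $S_{m,n}$ be the set of all such bottoms, i.e.\ those $\gamma\in N(m,n)$ for which $W(\gamma)$ has no unmatched $)$ (equivalently, every prefix of $W(\gamma)$ has at least as many $($ as $)$). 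Each element of $N(m,n)$ descends to a unique bottom, so the $C_\alpha$, $\alpha\in S_{m,n}$, are pairwise disjoint and cover $N(m,n)$; this gives (i) and (ii).

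For the rank symmetry (iii), the crux is that the \emph{set of matched pairs of $W$ is unchanged by an up- or down-move}. Concretely, if the leftmost unmatched $($ sits at position $p$, then when the greedy scan reaches $p$ the stack is empty -- any bracket lying below $p$ on the stack would itself be unmatched, contradicting the leftmost choice of $p$ -- so flipping $p$ affects neither the matching of positions $<p$ nor, since the stack dynamics above its bottom slot after $p$ do not notice whether that slot holds $p$ or is empty, the matching of positions $>p$; thus $p$ simply becomes an unmatched $)$ and the pattern changes from $)^{\,a}(^{\,b}$ to $)^{\,a+1}(^{\,b-1}$. Consequently, along $C_\alpha$ only the $a+b$ unmatched positions ever change, their labels sliding from all $($ at the start $S(C_\alpha)$ to all $)$ at the end $E(C_\alpha)$. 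Hence $\rank(S(C_\alpha))=k$ and $\rank(E(C_\alpha))=k+a+b$, and their sum is $2k+a+b=mn$, proving (iii) and also that each $C_\alpha$ is a saturated, hence (by (iii)) symmetric, chain.

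The block-and-word bookkeeping is routine; the step I expect to be the main obstacle is the invariance lemma in the last paragraph -- verifying that greedily rematching after a single flip recovers exactly the old pairs on the unchanged prefix and suffix, so that the up- and down-moves are well defined, mutually inverse, and keep every element inside one chain. Once that is established, Theorem~\ref{theo2} follows as sketched.
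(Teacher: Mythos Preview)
Your parenthesization argument is the classical Greene--Kleitman device, and it is correct; in fact the paper's Algorithm~\ref{alg-CTabCalpah} is this same construction in tableau dress: the ``fixed'' cells of row $i$ are your $\alpha_i$ closing brackets, the ``forbidden'' cells are exactly the opening brackets they greedily match (reading unfixed cells right-to-left, top-to-bottom is the stack discipline), and the ``fillable'' cells read bottom-to-top are the unmatched $($'s read left-to-right. So the underlying decomposition is the same, and your invariance lemma packages parts (i)--(iii) more economically than the paper, which proves disjointness (Lemma~\ref{lem:part2}) by an ad~hoc contradiction on fillable vectors and covering (Lemma~\ref{lem:part1}) by a backward recursion recovering $\alpha$ from $c$.

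There is, however, one concrete discrepancy you must repair before this proves the theorem \emph{as stated}. With your block order $i=1,\dots,m$, the bottoms (no unmatched $)$) are forced to have $\alpha_1=0$, whereas the paper's $S_{m,n}$ is defined by $\alpha_m=0$ together with the suffix inequalities $\sum_{i=t}^{m-1}\alpha_i\le\sum_{i=t+1}^{m}(n-\alpha_i)$. For instance $(2,0,5,0)\in S_{4,6}$ in the paper's sense, but your word $W(2,0,5,0)$ opens with two unmatched $)$'s, so it is not a bottom for you; your chains are the coordinate reversals of the paper's. The fix is to concatenate the blocks in the order $i=m,m-1,\dots,1$. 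Then ``no unmatched $)$'' reads, at the critical prefixes, as $\sum_{j=i}^{m}\alpha_j\le\sum_{j=i+1}^{m}(n-\alpha_j)$, which for $i=m$ forces $\alpha_m=0$ and for $i<m$ reproduces the paper's inequality verbatim; the leftmost unmatched $($ then lies in the block of largest row index still fillable, matching Step~4's bottom-to-top filling, and your chains coincide with the paper's $C_\alpha$ on the nose.
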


This paper is organized as follows:

In Section \ref{sec:Nmn}, firstly, we give the construction of $C_\alpha$ as Algorithm \ref{alg-CTabCalpah}.
The feasibility of our construction is discussed in the second subsection.
Finally, we prove that these $C_\alpha$ do give a symmetric chain decomposition of $N(m,n)$. This result is given by Theorem \ref{theo2}.
In Section \ref{sec:involution}, we give an beautiful property of $C_{\alpha}$: an induced involution as Theorem \ref{theo:involution}.

\section{The symmetric chain decomposition of the $N(m,n)$\label{sec:Nmn}}
In this section, we will give a symmetric chain decomposition of $N(m,n)$ into $C_\alpha$.
This is divided into 3 subsections. Firstly, we give the construction of $C_\alpha$.
The feasibility of our construction is discussed in the second subsection.
Finally, we prove that these $C_\alpha$ do give a symmetric chain decomposition of $N(m,n)$.

\subsection{The construction of the symmetric chains $C_{\alpha}$}\label{Def:Calpha}
The symmetric chains $C_\alpha$ are indexed by $S_{m,n}$, which is defined by
\begin{align*}
        S_{m,n}=&\{\alpha\in N(m,n): \alpha=(\alpha_1,\cdots,\alpha_{m-1},0), \alpha\models k, 0\leq k \leq \lfloor\frac{mn}{2}\rfloor, \\
        &\qquad \sum\limits_{i=t}^{m-1}\alpha_i \le \sum\limits_{i=t+1}^{m}(n-\alpha_i),1\le t \le m-1\}.
\end{align*}

For each ordered partition $\alpha\in S_{m,n}$, we construct a symmetric chain tableau
$C_\alpha$ as follows.

\begin{algor}
\label{alg-CTabCalpah}\normalfont

\noindent
\textbf{Input:} An ordered partition $\alpha \in S_{m,n}$.

\noindent
\textbf{Output:} The chain tableau $C_\alpha$.

\begin{enumerate}
  \item[\textbf{Step 1.}] Draw an empty tableau $T$ of shape $m\times n$.

   \item[\textbf{Step 2.}] For each $i$, color the first $\alpha_i$ cells in the $i$-th row of $T$ by green. These cells are called \emph{fixed cells} for $\alpha$.

    \item[\textbf{Step 3.}]  For $i$ from $1$ to $m-1$, we successively color some cells by gray as follows and call them \emph{forbidden cells} for $\alpha$.

                                           \begin{enumerate}
                                             \item For $i=1$, read the unfixed cells starting at row $i+1=2$, from right to left, and top to bottom. Color the first $\alpha_1$ cells by gray.
                                             These are the \emph{forbidden cells} for $\alpha_1$.

                                             \item Suppose the forbidden cells for $\alpha_1,\dots, \alpha_{i-1}$ have been colored by gray
                                             and we are going to construct the forbidden cells for $\alpha_i$. Starting at row $i+1$, we read the unfixed and unforbidden cells from right to left and from top to bottom. Color the first $\alpha_i$ cells by gray. These are the \emph{forbidden cells} for $\alpha_i$.

                                             \item Repeat the above step until the forbidden cells for $\alpha_{m-1}$ are colored gray.
                                           \end{enumerate}

 \item[\textbf{Step 4.}]     Call the unfixed and unforbidden cells fillable cells. Successively fill $1,2,3,\dots$ in the fillable cells from left to right and from bottom to top.
                           The resulting tableau is the chain tableau $C_\alpha$ of $\alpha$.
\end{enumerate}
\end{algor}
                           The number of \emph{forbidden cells} in the $i$-th row of $C_\alpha$  is denoted $\alpha^E_i$ for each $i=1,\cdots,m$. Clearly $\alpha^E_1=0$. Let $\alpha^E=(\alpha^E_1,\alpha^E_2,\cdots,\alpha^E_m)$. It is convenient to denote by $S(C_\alpha)=\alpha$ the starting point of $C_{\alpha}$ and by $E(C_\alpha)=
                           (n-\alpha^E_1,n-\alpha^E_2,\cdots,n-\alpha^E_m)$ the end point of $C_{\alpha}$.  The set $S_{m,n}$ is called the \emph{starting set} of our chain decomposition of $N(m,n)$.

For the sake of clarity, we give two examples to illustrate the Algorithm \ref{alg-CTabCalpah}.
                         \begin{exam}\label{exa1}
                                 The construction of $C_\alpha$ for $\alpha=(2,0,5,0)\in S_{4,6}$ is given in Figure \ref{fig:exa1}. The tableau $C_\alpha$ has natural correspondence with a situated chain, still denoted $C_{\alpha}:(2,0,5,0)<(2,0,5,1)<(2,0,6,1)<(2,1,6,1)<(2,2,6,1)<(2,3,6,1)<(2,4,6,1)<(3,4,6,1)<(4,4,6,1)<(5,4,6,1)<(6,4,6,1)$.
                                 The chain starts at $S(C_\alpha)=(2,0,5,0)$ and ends at $E(C_\alpha)=(6,4,6,1).$ We get $\alpha^E=(0,2,0,5)$.
                                \begin{figure}[!ht]
                                \centering{
                                \includegraphics[height=1 in]{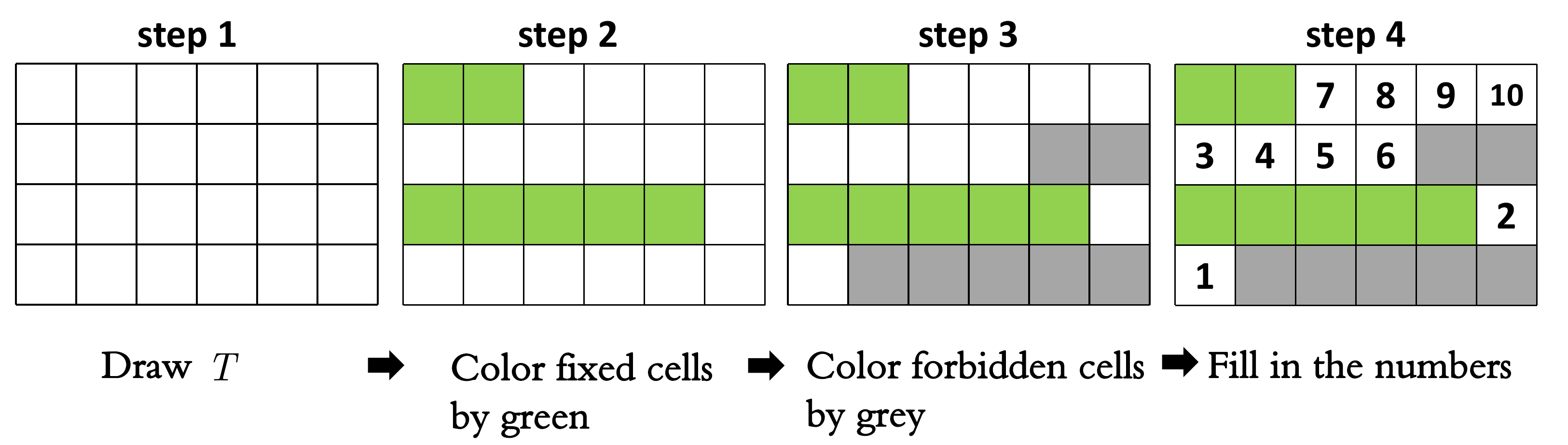}}
                                \caption{The construction of $C_{(2,0,5,0)}$.\label{fig:exa1}}
                                \end{figure}
                         \end{exam}
                         \begin{exam}
                                The construction of $C_\alpha$ for $\alpha=(1,3,2,0)\in S_{4,4}$ is given in Figure \ref{fig:exa2}. The tableau $C_\alpha$ has natural correspondence with a situated chain, still denoted $C_{\alpha}:(1,3,2,0)<(1,3,2,1)<(2,3,2,1)<(3,3,2,1)<(4,3,2,1)$.
                                The chain starts at $S(C_\alpha)=(1,3,2,0)$ and ends at $E(C_\alpha)=(4,3,2,1).$ We get $\alpha^E=(0,1,2,3)$.
                                \begin{figure}[!ht]
                                \centering{
                                \includegraphics[height=1 in]{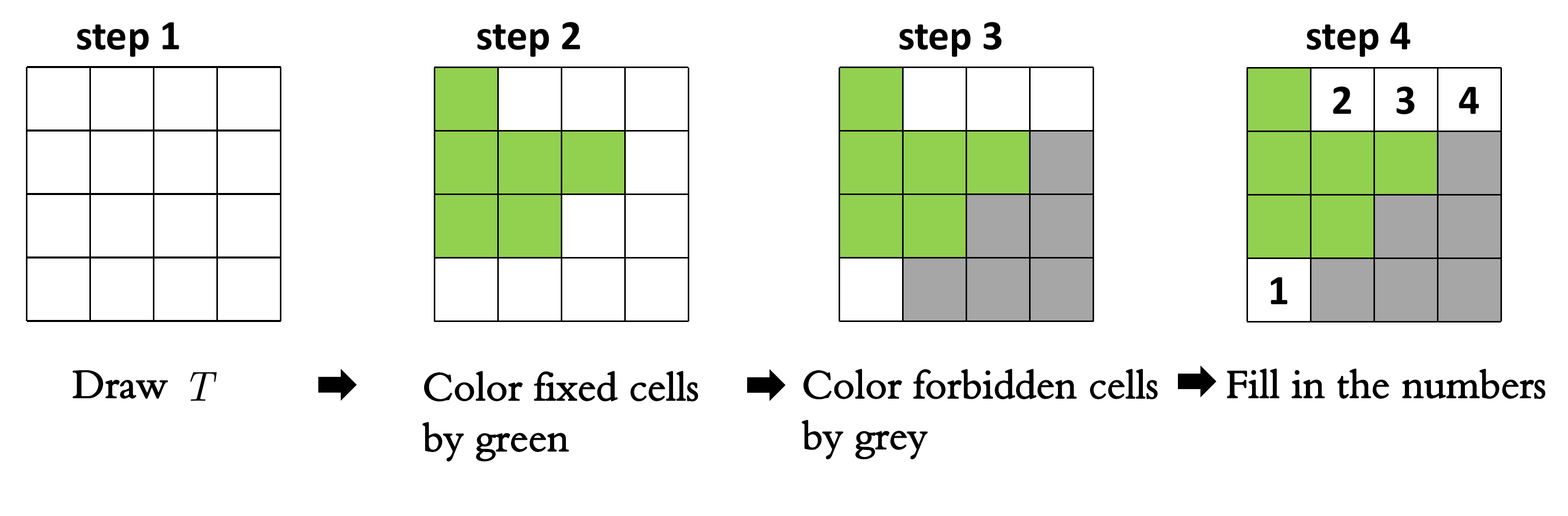}}
                                \caption{The construction of $C_{(1,3,2,0)}$.\label{fig:exa2}}
                                \end{figure}
                               \end{exam}


\subsection{The feasibility of Algorithm \ref{alg-CTabCalpah}}
To show the feasibility of Algorithm \ref{alg-CTabCalpah}, it suffices to show the third step is feasible, since the other steps are obviously feasible.
This is achieved by the following Lemma \ref{lem:splitting}. We need the following definition first.

 \begin{defn}
                                       Let $\alpha\in S_{m,n}$. We recursively define the splitting rows for $\alpha$ as follows.

                                       Row 1 is splitting.

                                       If row $p$ is splitting, then row $q+1$ is splitting, where $q$ is the smallest positive integer with $p\le q\le m-1$ such that $\sum\limits_{i=p}^{q}\alpha_i\le \sum\limits_{i=p+1}^{q+1}(n-\alpha_i)$, which condition is satisfied for $q=m-1$ for any $p$ by
                                       definition of $\alpha$.
 \end{defn}
The following lemma justifies the name of splitting row.

\begin{lem}\label{lem:splitting}
Suppose $\alpha \in S_{m,n}$ has splitting row indices $1=q_{1}<q_{2}<\cdots<q_{s}=m$.
According Algorithm \ref{alg-CTabCalpah}, if row $i$ is not splitting, then the forbidden cells for $\alpha_{i-1}$ has at least one cell in row $i+1$; otherwise
if row $i$ is splitting, then the forbidden cells for $\alpha_{i-1}$ has no cell in row $i+1$.
\end{lem}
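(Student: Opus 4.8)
The plan is to analyze the algorithm's coloring process row by row, tracking the key quantity: after the forbidden cells for $\alpha_1,\dots,\alpha_{i-1}$ have been placed, how many unfixed-unforbidden cells remain in rows $i+1, i+2, \dots, m$. Call this quantity $R_i$. Since row $j$ contributes $n-\alpha_j$ unfixed cells, and the forbidden cells for $\alpha_1,\dots,\alpha_{i-1}$ total $\sum_{k=1}^{i-1}\alpha_k$ cells all of which (by the reading order "starting at row $i+1$") lie in rows $\geq i+1$ — wait, more carefully, the forbidden cells for $\alpha_k$ start reading at row $k+1$, so they lie in rows $\geq k+1 \geq$ possibly less than $i+1$. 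So I first need a bookkeeping lemma: track $f_i^{(j)}$, the number of forbidden cells for $\alpha_1,\dots,\alpha_{i-1}$ lying in row $j$, and show that after step $i-1$ the number of still-available cells in rows $p,\dots,m$ for the relevant splitting block starting at $p$ is $\sum_{j=p+1}^{m}(n-\alpha_j) - \sum_{j=p}^{?}\alpha_j$ type expression. The cleanest route is to prove by induction on $i$ the invariant that, when we are about to place the forbidden cells for $\alpha_i$, the cells already forbidden exactly exhaust the rows from the top of the current splitting block downward in the right-to-left top-to-bottom order, so the number of available cells at or below row $i+1$ equals $\sum_{j=p+1}^{m}(n-\alpha_j) - \sum_{j=p}^{i-1}\alpha_j$ where $p$ is the most recent splitting row $\le i$.

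With that invariant in hand, the two claims follow by comparing $\alpha_i$ against the number of available cells remaining strictly above row $i+1$ — equivalently, whether the $\alpha_i$ forbidden cells for $\alpha_i$ "spill over" into row $i+1$. If row $i+1$ is splitting, then by definition $q=i$ is the smallest index $\ge p$ with $\sum_{j=p}^{i}\alpha_j \le \sum_{j=p+1}^{i+1}(n-\alpha_j)$, and minimality at $i$ together with the failure of the inequality at $i-1,\dots$ should force that the running total of forbidden cells for $\alpha_p,\dots,\alpha_i$ exactly fills (or underfills) rows $p+1,\dots,i+1$ without entering row $i+2$; so when we place the forbidden cells for $\alpha_{i}$ (the last of the block) they land in rows $\le i+1$, hence — re-reading the statement — it is $\alpha_{i-1}$ whose forbidden cells are at issue when row $i$ is splitting: row $i$ splitting means $\alpha_{i-1}$'s forbidden cells finished the previous block exactly at or above row $i$, leaving row $i+1$ clean at the moment we start a fresh block at row $i$. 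Conversely if row $i$ is not splitting, the defining inequality failed at the step that would have made it splitting, which translates into a strict deficit of available cells above row $i+1$, forcing at least one forbidden cell for $\alpha_{i-1}$ into row $i+1$. I will phrase this as: let $p$ be the splitting row with $p < i$ and no splitting row strictly between; row $i$ splitting $\iff \sum_{j=p}^{i-1}\alpha_j \le \sum_{j=p+1}^{i}(n-\alpha_j)$, and this is exactly the condition that the forbidden cells for $\alpha_p,\dots,\alpha_{i-1}$ (which, inductively, are placed greedily from row $p+1$ downward) fit within rows $p+1,\dots,i$.

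The main obstacle is the induction's bookkeeping: making precise that "placed greedily from the top of the block downward in reading order" means the forbidden cells for the block occupy a prefix (in right-to-left, top-to-bottom order) of the available region, so that their total count alone determines how far down they reach. This requires checking that once a block starts at splitting row $p$, the reading for $\alpha_p, \alpha_{p+1}, \dots$ each starts at row $p+1, p+2, \dots$ respectively but never reaches cells strictly above where the previous $\alpha$'s forbidden cells of the same block stopped — i.e. the starts are nested consistently with the greedy fill. I expect to handle this by a secondary induction within a block showing that after placing forbidden cells for $\alpha_p,\dots,\alpha_k$, the filled forbidden region of the block is exactly the first $\sum_{j=p}^{k}\alpha_j$ available cells in rows $\ge p+1$, and that the inequality $\sum_{j=p}^{k}\alpha_j \le \sum_{j=p+1}^{k+1}(n-\alpha_j)$ is precisely the statement that these cells lie in rows $\le k+1$. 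Once that is established, Lemma~\ref{lem:splitting} is immediate, and in particular the forbidden region for $\alpha_i$ is always nonempty-or-empty-in-row-$(i+1)$ exactly as the splitting condition dictates, which also shows Step 3 never runs out of room and hence Algorithm~\ref{alg-CTabCalpah} is feasible.
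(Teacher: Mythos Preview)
Your approach is correct and essentially the same as the paper's: both induct on the splitting-block structure, arguing that within the block starting at splitting row $p=q_k$ the forbidden cells for $\alpha_p,\dots,\alpha_{i-1}$ occupy exactly the first $\sum_{j=p}^{i-1}\alpha_j$ available cells (in the right-to-left, top-to-bottom order) of rows $p+1,p+2,\dots$, so that comparing this count to $\sum_{j=p+1}^{i}(n-\alpha_j)$ decides whether any land in row $i+1$. The paper's version is terser---its ``by similar reasons'' clause is precisely your secondary within-block induction establishing the prefix property---but the underlying argument is identical.
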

\begin{proof}
We prove by induction on $k$ the following claim: the lemma holds for $i=1,2,\dots, q_k$.

The base case $k=1$ is simple: since there is no forbidden cells for $\alpha_{0}$ and row $1$ is always splitting.
Assume the claim holds true for $k$, and we want to show that it holds for $k+1$.

Now row $i=q_k$ is splitting. By induction hypothesise, there is no forbidden cells for $\alpha_{q_k-1}$ in row $q_k+1$.
The forbidden cells for $\alpha_i$ starts at the rightmost cell in row $i+1$.
Then row $i+1$ is splitting if and only if $n-\alpha_{i+1}\ge \alpha_i$. This shows the claim holds when $q_{k+1}=i+1$.
If row $i+1$ is not splitting, then $n-\alpha_{i+1}> \alpha_i$, so that the forbidden cells for $\alpha_i$ occupies all
the rightmost $n-\alpha_{i+1}$ cells and at least one cell in row $i+2$.
This gives $\alpha_{i+1}^E=n-\alpha_{i+1}$.

By similar reasons, the forbidden cells for $\alpha_j$ has at least one
cell in row $j+2$ for $q_k \le q_{k+1}-2$, and we have
$\alpha^E_j=n-\alpha_j$ for $j=q_{k}+1,q_{k}+2,\cdots,q_{k+1}-1$.
Next consider the forbidden cells for $\alpha_i$ with $i=q_{k+1}-1$. They starts in row $i+2$ and end in the same row,
because of the inequality $\alpha_{q_k} + \alpha_{q_k+1} +\cdots + \alpha_{q_{k+1}-1} \le (n-\alpha_{{q_k}+1}) + (n-\alpha_{{q_k}+2}) + \cdots + (n-\alpha_{q_{k+1}})$
by the definition of splitting row $q_{k+1}$. The claim then holds for $k+1$.
\end{proof}

\begin{cor}\label{cor:aEnd}
Suppose $\alpha \in S_{m,n}$ has splitting row indices $1=q_{1}<q_{2}<\cdots<q_{s}=m$.
Then the following conditions hold for all $1\le k<s$.                                           \[
                                           \begin{cases}
                                            \alpha_{q_k} > n-\alpha_{{q_k}+1} ,\\
                                            \alpha_{q_k} + \alpha_{q_k+1} > (n-\alpha_{{q_k}+1}) + (n-\alpha_{{q_k}+2}) ,\\
                                           \qquad   \qquad \vdots\\
                                           \alpha_{q_k} + \alpha_{q_k+1} +\cdots + \alpha_{q_{k+1}-2} > (n-\alpha_{{q_k}+1}) + (n-\alpha_{{q_k}+2}) + \cdots + (n-\alpha_{q_{k+1}-1}) ,\\
                                           \alpha_{q_k} + \alpha_{q_k+1} +\cdots + \alpha_{q_{k+1}-1} \le (n-\alpha_{{q_k}+1}) + (n-\alpha_{{q_k}+2}) + \cdots + (n-\alpha_{q_{k+1}}) .\\
                                           \end{cases}\]
Furthermore, we have $\alpha^E_j=n-\alpha_j$ for each $j=q_{k}+1,q_{k}+2,\cdots,q_{k+1}-1$ and $\alpha^E_{q_{{k+1}}}=\sum\limits_{j=q_{k}}^{q_{k+1}-1}\alpha_j-\sum\limits_{j=q_{k}+1}^{q_{{k+1}}-1}\alpha^E_j$.
\end{cor}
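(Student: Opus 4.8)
The plan is to deduce the whole statement from the definition of the splitting rows together with Lemma \ref{lem:splitting} (more precisely, with its proof); no new idea is required, only careful bookkeeping. I would first dispatch the system of inequalities, which is merely a rephrasing of the recursive definition of splitting rows: by that definition $q_{k+1}=q+1$, where $q$ is the \emph{smallest} integer with $q_k\le q\le m-1$ such that $\sum\limits_{i=q_k}^{q}\alpha_i\le\sum\limits_{i=q_k+1}^{q+1}(n-\alpha_i)$. Instantiating this defining condition at $q=q_{k+1}-1$ gives exactly the last (weak) inequality of the claimed system. For every $q$ with $q_k\le q\le q_{k+1}-2$ the defining condition must fail, by minimality of $q_{k+1}-1$, i.e. $\sum\limits_{i=q_k}^{q}\alpha_i>\sum\limits_{i=q_k+1}^{q+1}(n-\alpha_i)$; letting $q$ run over $q_k,q_k+1,\dots,q_{k+1}-2$ reproduces the chain of strict inequalities. (When $q_{k+1}=q_k+1$ the strict part is vacuous and only the line $\alpha_{q_k}\le n-\alpha_{q_k+1}$ survives.)

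Next I would track the forbidden cells that Step 3 of Algorithm \ref{alg-CTabCalpah} colours for $\alpha_{q_k},\alpha_{q_k+1},\dots,\alpha_{q_{k+1}-1}$, in that order. Since row $q_k$ is splitting, Lemma \ref{lem:splitting} applied at $i=q_k$ says that no forbidden cell for $\alpha_{q_k-1}$ lies in row $q_k+1$, hence (cells being coloured from top to bottom, together with the block structure of the earlier splitting rows) none lies in any row $\ge q_k+1$; so at the moment this group begins, rows $q_k+1,\dots,q_{k+1}$ carry only fixed cells, and each row $j$ has exactly $n-\alpha_j$ non-fixed cells. Rows $q_k+1,\dots,q_{k+1}-1$ are not splitting, so Lemma \ref{lem:splitting} forces every group of forbidden cells to reach into the next row, whence the colouring runs as one contiguous ``staircase'' that fills rows $q_k+1,\dots,q_{k+1}-1$ completely; this gives $\alpha^E_j=n-\alpha_j$ for $j=q_k+1,\dots,q_{k+1}-1$. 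Because row $q_{k+1}$ is splitting, Lemma \ref{lem:splitting} applied at $i=q_{k+1}$ says the forbidden cells for $\alpha_{q_{k+1}-1}$ do not reach row $q_{k+1}+1$, so the entire group of $\sum\limits_{i=q_k}^{q_{k+1}-1}\alpha_i$ forbidden cells lies in rows $q_k+1,\dots,q_{k+1}$; subtracting off the $\sum\limits_{i=q_k+1}^{q_{k+1}-1}(n-\alpha_i)=\sum\limits_{i=q_k+1}^{q_{k+1}-1}\alpha^E_i$ of them that fill the intermediate rows leaves $\alpha^E_{q_{k+1}}=\sum\limits_{j=q_k}^{q_{k+1}-1}\alpha_j-\sum\limits_{j=q_k+1}^{q_{k+1}-1}\alpha^E_j$ cells in row $q_{k+1}$.

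The one point that needs genuine care — and the step I expect to be the main obstacle — is the ``staircase'' claim itself: that the forbidden cells for $\alpha_{q_k},\dots,\alpha_{q_{k+1}-1}$ really do form a single contiguous region confined to rows $q_k+1,\dots,q_{k+1}$, with no group skipping a row and none spilling past row $q_{k+1}$, since the simple cell count above rests on it. I would prove it by a short induction on $j=q_k,q_k+1,\dots,q_{k+1}-1$, parallel to the induction in the proof of Lemma \ref{lem:splitting}: after colouring the forbidden cells for $\alpha_{q_k},\dots,\alpha_j$, the grey cells in rows $\ge q_k+1$ are precisely the non-fixed cells of an initial block of those rows together with a left-justified prefix of one further row. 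The strict inequalities $\sum\limits_{i=q_k}^{q'}\alpha_i>\sum\limits_{i=q_k+1}^{q'+1}(n-\alpha_i)$ for $q'\le q_{k+1}-2$ guarantee that at least one cell spills into the next row at each intermediate step (in particular $\alpha^E_{q_{k+1}}>0$ when $q_{k+1}>q_k+1$, while trivially $\alpha^E_{q_{k+1}}=\alpha_{q_k}\ge 0$ when $q_{k+1}=q_k+1$), and the weak inequality at $q'=q_{k+1}-1$ guarantees the last group fits inside row $q_{k+1}$, i.e. $\alpha^E_{q_{k+1}}\le n-\alpha_{q_{k+1}}$. With the staircase structure in hand the two displayed formulas for $\alpha^E$ are immediate, and everything else is routine.
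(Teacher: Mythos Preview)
Your proposal is correct and follows essentially the same route as the paper: the corollary is stated there without a separate proof because its content is already contained in the definition of splitting rows (for the system of inequalities) and in the inductive argument inside the proof of Lemma~\ref{lem:splitting} (for the values of $\alpha^E_j$), and you have simply made those two extractions explicit and then supplied the bookkeeping. One cosmetic slip: in your staircase invariant the partial row of grey cells is a \emph{rightmost} segment (the algorithm reads unfixed, unforbidden cells from right to left), not a ``left-justified prefix''; this does not affect the counts and hence not the argument.
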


Corollary \ref{cor:aEnd} provides a better way to compute $\alpha^E$. See Figure \ref{fig:alphaE} for an example.

                 \begin{figure}[!ht]
                 \centering{
                 \includegraphics[height=3 in]{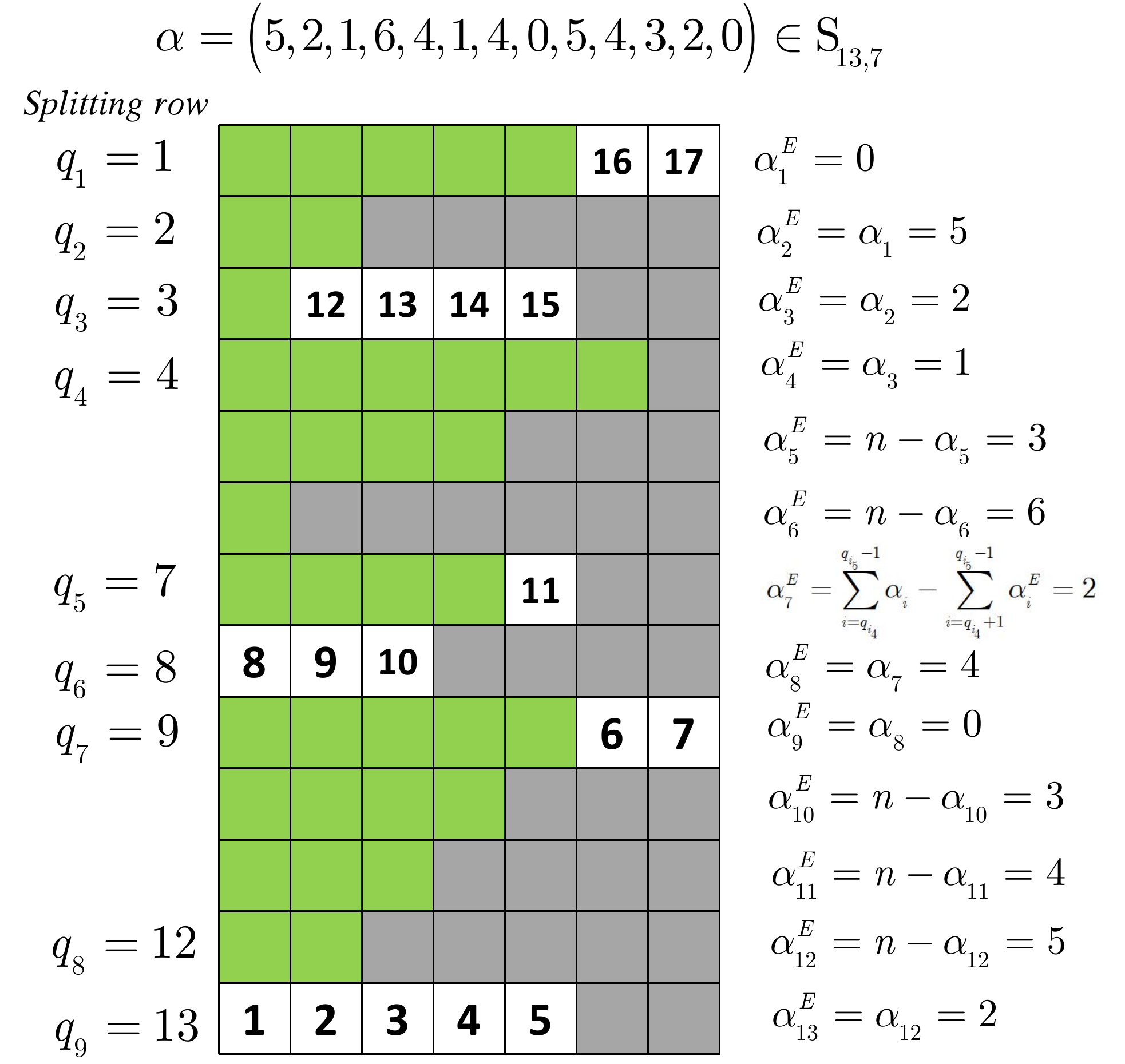}}
                 \caption{Computation of $\alpha^E$ by Corollary \ref{cor:aEnd}, where $\alpha=(5,2,1,6,4,1,4,0,5,4,3,2,0)$.\label{fig:alphaE}}
                 \end{figure}

\subsection{The symmetric chain decomposition of $N(m,n)$}
Given $\alpha\in S_{m,n}$. Elements $c=(c_1,c_2,\cdots,c_m)\in C_{\alpha}$ in $C_\alpha$ can be described by the vector
$a=(a_1,a_2,\cdots,a_m)=c-\alpha$, called a \emph{fillable vector}. We will use the following equivalent definition, which follows easily from Algorithm
\ref{alg-CTabCalpah}: Let $d^{\alpha}_i=(n-\alpha_i)-\alpha^E_i$ be the number of cells in row $i$ that are neither fixed nor forbidden.
A vector $a_{\alpha}=(a_1,a_2,\cdots,a_m)$ is a {fillable vector} with respect to $\alpha$ if and only if there exists an integer $0 \le i_0 \le m$ such that $a_i=0$ for each $i=1,\cdots, i_0-1$, $0<a_{i_0}< d^{\alpha}_{i_0}$ and $a_j=d^{\alpha}_j$ for each $j=i_0+1,\cdots,m$. The $i_0=0$ case corresponds to $a_i=d^{\alpha}_i$ for $i=1,\cdots,m$.

According to the Algorithm \ref{alg-CTabCalpah}, we have the following properties:
\begin{prop}\label{prop1}
Assume $c=(c_1,c_2,\cdots,c_m)\in C_{\alpha}$ for some $\alpha=(\alpha_1,\cdots,\alpha_{m-1},0)\in S_{m,n}$. Then
there exists a unique fillable vector $a=(a_1,a_2,\cdots,a_m)$ such that $c=\alpha+a$. Moreover,
denote by $P=\{1\le i\le m: a_i>0\} \cup \{m\}$. Then

i) If $i\in P$ then row $i$ is splitting, and $c_t+\alpha^E_t=n$ for $t\ge i+1$;

ii) Suppose $j\in P$ and $a_t=0$ for all $i+1\le t \le j-1$. Then we have
\begin{align}
  \sum\limits_ {t=i}^{j-1}\alpha_t=\sum\limits_{t=i+1}^{j}\alpha^E_t=\sum\limits_{t=i+1}^{j}(n-c_t), \qquad \text{ if } a_i>0;\\
  \sum\limits_{t=i}^{j-1}\alpha_t \le \sum\limits_{t=i+1}^{j}\alpha^E_t=\sum\limits_{t=i+1}^{j}(n-c_t), \qquad \text{ if } a_i=0.
\end{align}
\end{prop}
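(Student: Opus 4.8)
The plan is to extract everything from the structure of Algorithm \ref{alg-CTabCalpah} together with Lemma \ref{lem:splitting} and Corollary \ref{cor:aEnd}, and to translate the combinatorics of filling cells into the arithmetic identities claimed. First I would recall the equivalent description of fillable vectors given just before the statement: if $c=\alpha+a\in C_\alpha$, then $a$ is a fillable vector, so there is a threshold index $i_0$ with $a_i=0$ for $i<i_0$, $0<a_{i_0}<d^\alpha_{i_0}$, and $a_j=d^\alpha_j$ for $j>i_0$. Uniqueness of $a$ is immediate since $a=c-\alpha$ is determined by $c$; the content is that $a$ is genuinely of this form, which is exactly the characterization already established, so I would simply cite it. This also pins down $P=\{i:a_i>0\}\cup\{m\}$: apart from the forced element $m$, $P$ consists of $i_0$ (if $i_0\ge 1$) together with all $j>i_0$ for which $d^\alpha_j>0$.

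For part i), I would argue that every $i\in P$ with $a_i>0$ forces row $i$ to be splitting. The key observation is that $a_i>0$ means row $i$ contains a fillable (neither fixed nor forbidden) cell, i.e. $d^\alpha_i=(n-\alpha_i)-\alpha^E_i>0$. By Lemma \ref{lem:splitting}, the forbidden cells for $\alpha_{i-1}$ land in row $i+1$ exactly when row $i$ is \emph{not} splitting; I would show that if row $i$ were non-splitting then, chasing the construction in Step 3, the forbidden cells for $\alpha_{i-1}$ (and hence those for later rows that continue to overflow) would fill row $i$ entirely, forcing $\alpha^E_i=n-\alpha_i$ and $d^\alpha_i=0$, a contradiction. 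Hence row $i$ is splitting. For the statement $c_t+\alpha^E_t=n$ for $t\ge i+1$: once $a_i>0$, the fillable-vector shape forces the threshold $i_0\le i$, hence $a_t=d^\alpha_t$ for all $t\ge i+1$, so $c_t=\alpha_t+d^\alpha_t=\alpha_t+(n-\alpha_t)-\alpha^E_t=n-\alpha^E_t$, which is the claim.

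For part ii), take consecutive elements $i<j$ of $P$ with $a_t=0$ strictly between them. By the definition of splitting rows and Corollary \ref{cor:aEnd}, the interval $[i,j]$ decomposes according to the splitting-row indices contained in it; but since rows $i+1,\dots,j-1$ all have $a_t=0$ (hence $d^\alpha_t=0$, hence are non-splitting, as in the proof of i)), the only splitting rows in $[i,j]$ are $i$ — when $a_i>0$ this is clear from part i), and when $a_i=0$ one checks row $i$ is still splitting because it is an element of $P$ or equals the threshold structure — and $j$. So $q_k=i$, $q_{k+1}=j$ in the notation of Corollary \ref{cor:aEnd}. The final clause of that Corollary gives $\alpha^E_t=n-\alpha_t$ for $i+1\le t\le j-1$ and $\alpha^E_j=\sum_{t=i}^{j-1}\alpha_t-\sum_{t=i+1}^{j-1}\alpha^E_t$, which rearranges to $\sum_{t=i}^{j-1}\alpha_t=\sum_{t=i+1}^{j}\alpha^E_t$ — the equality in the $a_i>0$ case. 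When $a_i=0$, the splitting-row chain of inequalities in Corollary \ref{cor:aEnd} (the ``$>$'' lines become the relevant direction) together with the fact that row $i$ is the \emph{start} of a splitting block but $a_i=0$ means row $i$ is not itself being ``overflowed into'' by row $i-1$ beyond capacity; carefully, $a_i=0$ forces $i=q_k$ for some $k$ with $d^\alpha_i=0$ allowed, and then the weak inequality $\sum_{t=i}^{j-1}\alpha_t\le\sum_{t=i+1}^{j}\alpha^E_t$ is exactly the defining inequality of the splitting row $j=q_{k+1}$. In both cases $\alpha^E_t=n-c_t$ for $t=i+1,\dots,j$ because those rows have $a_t=0$ (so $c_t=\alpha_t$) for $t<j$ while for $t=j\in P$ we use part i) if $a_j>0$ and a direct check if $j=m$; substituting $\alpha_t=c_t$ into $\alpha^E_t=n-\alpha_t$ yields the stated identification.

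The main obstacle I expect is bookkeeping the case split cleanly — specifically, verifying in the $a_i=0$ subcase of ii) that $i$ really is a splitting row and that no splitting row lies strictly between $i$ and $j$, so that Corollary \ref{cor:aEnd} applies verbatim to the block $[i,j]$. This requires relating ``$a_t=0$'' to ``$d^\alpha_t=0$'' to ``row $t$ non-splitting'' uniformly, which is the one place the fillable-vector shape and Lemma \ref{lem:splitting} have to be combined with care rather than cited off the shelf.
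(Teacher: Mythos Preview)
First, note that the paper itself gives no proof of this proposition: after the sentence ``According to the Algorithm~\ref{alg-CTabCalpah}, we have the following properties'' the statement is simply recorded and the text moves on. So there is nothing to compare against; your proposal is attempting to supply an argument the paper omits.

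Your argument for uniqueness and for part~i) is sound: if $a_i>0$ then $d^\alpha_i>0$, and Corollary~\ref{cor:aEnd} forces $\alpha^E_t=n-\alpha_t$ (hence $d^\alpha_t=0$) on every non-splitting row, so row $i$ must be splitting; the formula $c_t=n-\alpha^E_t$ for $t\ge i+1$ then follows from $a_t=d^\alpha_t$. The problems are in part~ii). You claim that $a_t=0$ for $i+1\le t\le j-1$ gives $d^\alpha_t=0$ and hence that these rows are non-splitting, so that $i,j$ are \emph{consecutive} splitting indices and Corollary~\ref{cor:aEnd} applies to the single block $[i,j]$. Neither implication is valid in general. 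The inference $a_t=0\Rightarrow d^\alpha_t=0$ holds only for $t>i_0$; in the $a_i=0$ subcase one may have $i_0=j$, so every $t<j$ lies below the threshold and $d^\alpha_t$ is unconstrained. And even when $d^\alpha_t=0$, row $t$ can still be splitting: $d^\alpha_{q_{k+1}}=\sum_{r=q_k+1}^{q_{k+1}}(n-\alpha_r)-\sum_{r=q_k}^{q_{k+1}-1}\alpha_r\ge 0$ with equality allowed. So $[i,j]$ may span several splitting blocks. In the $a_i>0$ subcase this is harmless, since the block identities $\sum_{t=q_\ell}^{q_{\ell+1}-1}\alpha_t=\sum_{t=q_\ell+1}^{q_{\ell+1}}\alpha^E_t$ telescope to the desired equality, but you should say so rather than assert a single block.

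There is a second, sharper gap in your derivation of $\alpha^E_t=n-c_t$ on $[i+1,j]$. For $t<j$ you substitute $c_t=\alpha_t$ into ``$\alpha^E_t=n-\alpha_t$'', but that identity holds only on non-splitting rows, which you have not established. For $t=j$ you invoke part~i), but part~i) applied to $j\in P$ gives $c_t+\alpha^E_t=n$ only for $t\ge j+1$, not for $t=j$ itself; you would need to apply part~i) to some earlier element of $P$, which in the $a_i=0$ subcase need not exist. In fact the termwise equality $\alpha^E_j=n-c_j$ requires $a_j=d^\alpha_j$, which fails precisely when $j$ is the threshold index $i_0$. Concretely, for $\alpha=(2,0,5,0)\in S_{4,6}$ and $c=(2,0,5,1)$ one has $a=(0,0,0,1)$, $P=\{4\}$, and with $i=2$, $j=4$ the sums are $\alpha^E_3+\alpha^E_4=0+5=5$ but $(n-c_3)+(n-c_4)=1+5=6$. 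So either the second equality in the $a_i=0$ line is meant under an extra hypothesis (e.g.\ that $j$ is not the threshold, equivalently $a_j=d^\alpha_j$), or it should be read as a statement about the case actually used in Lemma~\ref{lem:part1}; in any event your current argument does not prove it, and cannot as written.
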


Next, we show that the $\biguplus_{\alpha\in S_{m,n}}C_{\alpha}$ by the above Algorithm \ref{alg-CTabCalpah} is a symmetric chain decomposition of $N(m,n)$ as described by the Theorem \ref{theo2}. Now we give the proof of the Theorem \ref{theo2} as follows.

\begin{proof}
Part i) and ii) follows from Lemmas \ref{lem:part1} and \ref{lem:part2} below. Part iii) is obvious.
\end{proof}

\begin{lem}\label{lem:part2}
  Suppose $\alpha$ and $\beta$ are distinct element of $S_{m,n}$. Then $C_\alpha \cap C_\beta=\emptyset$.
\end{lem}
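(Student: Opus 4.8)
The strategy is to show that from any element $c \in C_\alpha$ we can reconstruct $\alpha$; since $\alpha \neq \beta$ would then give $C_\alpha \cap C_\beta = \emptyset$ immediately. Equivalently, I would define a ``recovery map'' $\varphi$ on $N(m,n)$ such that $\varphi(c) = \alpha$ for every $c \in C_\alpha$, and verify it is well-defined. The key structural information is supplied by Proposition \ref{prop1}: if $c = \alpha + a$ with $a$ the fillable vector, then the set $P = \{i : a_i > 0\} \cup \{m\}$ consists of splitting rows, and on the blocks between consecutive elements of $P$ the coordinates satisfy the tight equalities and inequalities in (1) and (2), relating partial sums of $\alpha$ to partial sums of $n - c_t$.

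Concretely, I would proceed as follows. First, given $c$, identify a candidate for the largest index $i_0$ with $a_{i_0} > 0$ — call it the ``pivot.'' Using part i) of Proposition \ref{prop1}, for all $t$ strictly above the pivot we must have $\alpha^E_t = n - c_t$, i.e. $\alpha_t = c_t - a_t = c_t - d^\alpha_t = c_t - (n - \alpha_t) - \alpha^E_t$... more usefully, below the pivot $a_t = d^\alpha_t$ is forced, so $c_t = \alpha_t + d^\alpha_t = n - \alpha^E_t$, which pins down $c_t$ in terms of $\alpha$ alone. The plan is to read off the splitting-row structure of $\alpha$ directly from $c$: walk through the rows from top to bottom, and at each stage use the block equalities of Proposition \ref{prop1}(ii) together with Corollary \ref{cor:aEnd} (the strict inequalities characterizing splitting rows) to decide where the next splitting row begins and hence to solve for the block of $\alpha_t$'s. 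Because the inequalities in Corollary \ref{cor:aEnd} are strict before $q_{k+1}$ and non-strict at $q_{k+1}$, the splitting indices are uniquely determined by $c$, and then within each block the equalities $\sum_{t=q_k}^{q_{k+1}-1}\alpha_t = \sum_{t=q_k+1}^{q_{k+1}}(n-c_t)$ together with the row-by-row recursion recover each $\alpha_t$.

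I would organize the write-up as: (a) observe it suffices to recover $\alpha$ from any $c \in C_\alpha$; (b) locate the pivot index $i_0$ from $c$ and show the coordinates $c_{i_0+1}, \dots, c_m$ already determine $\alpha_{i_0+1}, \dots, \alpha_m$ via $\alpha^E$; (c) induct downward (or process the blocks between splitting rows) using Proposition \ref{prop1} and Corollary \ref{cor:aEnd} to recover the remaining $\alpha_t$; (d) conclude $\alpha = \varphi(c)$ depends only on $c$. The main obstacle I anticipate is step (c): cleanly showing that the splitting-row decomposition — and in particular the pivot block containing $i_0$, where $a_{i_0}$ is only known to satisfy $0 < a_{i_0} < d^\alpha_{i_0}$ rather than being extremal — can be detected from $c$ alone without circularity. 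This requires carefully exploiting the strictness in Corollary \ref{cor:aEnd} to separate ``the block is still open'' from ``the block just closed,'' and checking that the value $c_{i_0} = \alpha_{i_0} + a_{i_0}$ together with the recovered $\alpha_{i_0}$ is consistent with $a_{i_0}$ lying in the open interval. Once the bookkeeping for that block is handled, the rest is a routine unwinding of the definitions of $d^\alpha_i$ and $\alpha^E$.
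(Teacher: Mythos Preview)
Your plan is sound in principle but takes a different and heavier route than the paper. The paper's argument is a three-line contradiction: choose the largest index $i_0$ with $\alpha_{i_0}\neq\beta_{i_0}$ (say $\alpha_{i_0}<\beta_{i_0}$), assume $c=\alpha+a=\beta+b$ lies in both chains, and note $a_{i_0}=b_{i_0}+(\beta_{i_0}-\alpha_{i_0})>0$. Since $a_{i_0}>0$, Proposition~\ref{prop1}(ii)(1) (telescoped over the blocks from $i_0$ to $m$, using part~(i) to rewrite $\alpha^E_t=n-c_t$ for $t>i_0$) gives the \emph{equality} $\sum_{t=i_0}^{m-1}\alpha_t=\sum_{t=i_0+1}^{m}(n-c_t)=:X$, whereas for $\beta$ the same proposition gives only the \emph{inequality} $\sum_{t=i_0}^{m-1}\beta_t\le X$; but $\alpha_t=\beta_t$ for $t>i_0$ and $\alpha_{i_0}<\beta_{i_0}$ force $X<X$.

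Your approach---constructing an explicit recovery map $\varphi$ with $\varphi(c)=\alpha$---is exactly what the paper carries out later in the proof of Lemma~\ref{lem:part1}, via a backward scan from row $m$ that uses the dichotomy ``$\sum_{i=r}^{t-1}c_i\le\sum_{i=r+1}^{t}(n-c_i)$ iff $a_r=0$'' to detect the next element of $P$ from $c$ alone. So your plan would in effect prove Lemmas~\ref{lem:part2} and~\ref{lem:part1} simultaneously, at the cost of the pivot-block bookkeeping you correctly flag in step~(c). The paper instead separates the two: disjointness falls out almost for free from Proposition~\ref{prop1}, and the constructive recovery is postponed to where it is actually needed for surjectivity. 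If you want to follow your route, the obstacle you anticipate is resolved precisely by the strict/non-strict split in Proposition~\ref{prop1}(ii): that split is what lets you read off $a_r=0$ versus $a_r>0$ from partial sums of $c$ without circular reference to $\alpha$.
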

\begin{proof}
Write $\alpha=(\alpha_1,\alpha_2,\cdots,\alpha_{m-1},0)$ and $\beta=(\beta_1,\beta_2,\cdots,\beta_{m-1},0)$.
Since $\alpha\neq \beta$, there exists a positive integer $1 \le i_0 \le m-1$ such that $\alpha_{i_0}\neq\beta_{i_0}$ and $\alpha_{i}=\beta_{i}$ for each
$i$ with $i_0<i\le m-1$.   Without loss of generality, we may assume $\alpha_{i_0}<\beta_{i_0}$.

Assume to the contrary that $c\in C_{\alpha}\cap C_{\beta}$. Then there exists fillable vectors $a_{\alpha}=(a_1,\cdots,a_m)$ and $b_{\beta}=(b_1,\cdots,b_m)$ such that
$c=\alpha+a_{\alpha}=\beta+b_{\beta}$.

Now $a_{i_0}+\alpha_{i_0}=b_{i_0}+\beta_{i_0}$ implies that $a_{i_0}=b_{i_0}+\beta_{i_0}-\alpha_{i_0}>0$. It follows that
                                          \[
                                           \begin{cases}
                                           \text{$\textcircled{1}$\  $\alpha_{i}=\beta_{i}$ and $a_i=b_i$, where $i_0\le i\le m$.}\\
                                           \text{$\textcircled{2}\  X:=\alpha_{i_0}+\alpha_{i_0+1}+\cdots+\alpha_{m-1}$}\\
                                           \text{$=(n-\alpha_{i_0+1}-a_{i_0})+\cdots+(n-\alpha_{m-1}-a_{m-1})+(n-a_m)$, where $a_{i_0}>0$.}\\
                                           \text{$\textcircled{3}\  \beta_{i_0}+\beta_{i_0+1}+\cdots+\beta_{m-1}$}\\
                                           \text{$\le(n-\beta_{i_0+1}-b_{i_0})+\cdots+(n-\beta_{m-1}-b_{m-1})+(n-b_m)$.}
                                           \end{cases}\]
Thus we get
                                           $X=\alpha_{i_0}+\alpha_{i_0+1}+\cdots+\alpha_{m-1}<\beta_{i_0}+\beta_{i_0+1}+\cdots+\beta_{m-1}\le X$,
                                           where $X=(n-\alpha_{i_0+1}-a_{i_0})+\cdots+(n-\alpha_{m-1}-a_{m-1})+(n-a_m)=(n-\beta_{i_0+1}-b_{i_0})+\cdots+(n-\beta_{m-1}-b_{m-1})+(n-b_m)$.
                                           Therefore $X<X$, which is a contradiction. This proves the lemma.

\end{proof}

\begin{lem}\label{lem:part1}
  For any $c\in N(m,n)$, there is an $\alpha\in S_{m,n}$ such that $c\in C_\alpha$.
\end{lem}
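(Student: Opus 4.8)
The plan is to prove Lemma \ref{lem:part1} by constructing, for an arbitrary $c=(c_1,\dots,c_m)\in N(m,n)$, an explicit $\alpha\in S_{m,n}$ together with a fillable vector $a$ with respect to $\alpha$ so that $c=\alpha+a$. The construction must be carried out from the bottom row upward, mimicking the reverse of Algorithm \ref{alg-CTabCalpah}. Recall from the equivalent definition of fillable vectors that $a$ has the shape: $a_i=0$ for $i<i_0$, $0<a_{i_0}<d^\alpha_{i_0}$, and $a_j=d^\alpha_j$ for $j>i_0$, where $d^\alpha_j=(n-\alpha_j)-\alpha^E_j$. So the index $i_0$ is the first row in which the chain $C_\alpha$ "moves," and above it $\alpha$ already agrees with $c$.

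First I would handle the ``tail'' of $c$: since $\alpha_m=0$ always, we must have $a_m=c_m$, and by the fillable-vector shape either $m=i_0$ (then $0<c_m<d^\alpha_m=n-\alpha^E_m$) or $a_m=d^\alpha_m$. More systematically, I would define $\alpha$ greedily from row $m$ downward: tentatively set $\alpha_i$ so that the forbidden cells (computed as in Corollary \ref{cor:aEnd}) account exactly for $n-c_i$ in each row below the ``active'' row, and for rows at and above the active row set $\alpha_i=c_i$. Concretely, one scans for the largest index $i_0$ such that the partial sums $\sum_{t=i_0}^{j-1}\alpha_t$ and $\sum_{t=i_0+1}^{j}(n-c_t)$ can be matched in the manner forced by Proposition \ref{prop1}: set $\alpha_j=c_j$ for $j<i_0$, and for $j\ge i_0$ choose $\alpha_j$ so that $c_j+\alpha^E_j=n$ except that in row $i_0$ itself one has a strict inequality giving $0<a_{i_0}<d^\alpha_{i_0}$. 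The key identities to exploit are exactly equations (1)--(2) of Proposition \ref{prop1}, read backwards: they tell us what $\alpha^E$ must be, hence what $\alpha$ must be, given $c$.

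The main verifications are then: (a) the $\alpha$ so produced actually lies in $S_{m,n}$ — this requires checking $\alpha=(\alpha_1,\dots,\alpha_{m-1},0)$, that $\rank(\alpha)=\sum\alpha_i\le\lfloor mn/2\rfloor$, and that the defining inequalities $\sum_{i=t}^{m-1}\alpha_i\le\sum_{i=t+1}^{m}(n-\alpha_i)$ hold for all $t$; (b) the vector $a=c-\alpha$ is genuinely a fillable vector with respect to $\alpha$, i.e. it has the required staircase shape with the $\alpha^E$ values computed by Algorithm \ref{alg-CTabCalpah} agreeing with those used in the construction. For (a), the rank bound should follow because $c$ and its ``complement'' play symmetric roles and $\alpha$ is obtained by keeping the smaller half; more carefully, if $\rank(\alpha)>\lfloor mn/2\rfloor$ one replaces $c$ by the reflected chain element, using part iii) of Theorem \ref{theo2} (symmetry) to land in the correct half — so I would first reduce to the case $\rank(c)\le\lceil mn/2\rceil$ or handle both halves by the reflection $c_i\mapsto n-c_i$ with rows reversed. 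For the inequalities defining $S_{m,n}$, I expect them to come out of the greedy/maximal choice of the splitting structure: each time we are forced past a row, we used up strictly more than the available complement (the strict inequalities in Corollary \ref{cor:aEnd}), and at a splitting row equality or slack holds.

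The hard part will be showing that the greedy downward construction is well-defined and consistent — that is, that the $\alpha^E$ forced by matching $c$ row-by-row via Proposition \ref{prop1}'s identities is simultaneously (i) non-negative and $\le n$ in every row, (ii) equal to the $\alpha^E$ that Algorithm \ref{alg-CTabCalpah} would independently assign to the constructed $\alpha$, and (iii) such that the ``active row'' $i_0$ genuinely satisfies the strict inequality $0<a_{i_0}<d^\alpha_{i_0}$ rather than degenerating. I anticipate needing a careful induction from row $m$ upward, maintaining as invariant the running comparison between $\sum\alpha_t$ (over a block since the last splitting row) and $\sum(n-\alpha_t)$, exactly paralleling the induction in the proof of Lemma \ref{lem:splitting}; uniqueness of the decomposition (which we already know from Lemma \ref{lem:part2}) can be used as a consistency check but the existence direction still needs the explicit block-by-block argument. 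Once the construction is shown well-defined, membership in $S_{m,n}$ and the fillable-vector property are comparatively routine bookkeeping with the formulas in Corollary \ref{cor:aEnd}.
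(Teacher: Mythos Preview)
Your approach is essentially the paper's: both reconstruct $\alpha$ from $c$ by scanning rows from $m$ downward, using the identities of Proposition~\ref{prop1} to decide at each stage whether $a_r=0$ or $a_r>0$, and setting $\alpha_i=c_i$ on the zero stretches while solving for $\alpha_r$ at the positive positions via the block equality $\sum_{i=r}^{t-1}\alpha_i=\sum_{i=r+1}^{t}(n-c_i)$. The paper makes the decision criterion explicit---$a_r>0$ iff $\sum_{i=r}^{t-1}c_i>\sum_{i=r+1}^{t}(n-c_i)$---and proves it by contradiction from Proposition~\ref{prop1}(ii), which is exactly the ``hard part'' you flag.

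One unnecessary detour in your plan: the reflection argument for the rank bound $\rank(\alpha)\le\lfloor mn/2\rfloor$ is not needed. Taking $t=1$ in the $S_{m,n}$ inequality $\sum_{i=t}^{m-1}\alpha_i\le\sum_{i=t+1}^{m}(n-\alpha_i)$ already gives $2\rank(\alpha)\le (m-1)n+\alpha_1\le mn$, so the rank bound is redundant once the tail inequalities are verified. Also, be careful that there is not a single ``active row'' $i_0$ in the backward scan: the set $P$ of indices with $a_r>0$ can have several elements (one per splitting block with $d^\alpha>0$), and the paper's algorithm iterates over all of them; your later mention of a ``block-by-block'' induction is the right picture, but your earlier description in terms of one $i_0$ undersells the bookkeeping required.
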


\begin{proof}
For given $c=(c_1,c_2,\cdots,c_m)$, we assume the existence of an undetermined
$\alpha=(\alpha_1,\alpha_2,\cdots,\alpha_{m-1},0)\in S_{m,n}$ and an undetermined fillable vector $a=(a_1,a_2,\cdots,a_m)$ such that $c=\alpha+a$.

We recursively compute $P$, $\alpha$, $\alpha^E$, and $a$ backwardly as follows.
\begin{enumerate}
  \item Set $P=\{m\}$, $a_m=c_m$, $\alpha_m=0$, and $t=m$. Treat $a_0>0$.

  \item Iteratively using the following Claim to find a $r$ such that
  $a_{r+1}=a_{r+2}=\cdots = a_{t-1}=0$ and $a_r>0$ for some $r\le t-1$. Add the element $r$ into $P$.

  \item If $r>0$ then $a_{r+1}=a_{r+2}=\cdots = a_{t-1}=0$ which implies $\alpha_i=c_i$ for $r+1\le i \le t-1$,
  and set $\alpha_r= \sum\limits_{i=r+1}^{t}(n-c_i)-\sum\limits_{i=r+1}^{t-1}c_i$.
  Set $t=r$ and go to step 2.

   \item If $r=0$ then $a_{t-1}=a_{t-2}=\cdots = a_{1}=0$, and we have $\alpha_i=c_i$ for $1\le i \le t-1$.
\end{enumerate}
The lemma clearly follows if we prove the following claim.

\textbf{Claims:}
  Suppose $t\in P$, $r<t$ and $a_{r+1}=\cdots=a_{t-1}=0$. Then

i) if $\sum\limits_{i=r}^{t-1}c_i\le \sum\limits_{i=r+1}^{t}(n-c_i)$ then $a_r=0$;

ii) if $\sum\limits_{i=r}^{t-1}c_i> \sum\limits_{i=r+1}^{t}(n-c_i)$ then $a_r>0$. Consequently
$\alpha_i=c_i$ for each $i=r+1,\cdots,t-1$ . Hence $\sum\limits_{i=r}^{t-1}\alpha_i = \sum\limits_{i=r+1}^{t}(n-\alpha_i)$.

For part i), assume to the contrary that $a_r>0$ holds for possible $\alpha$ and $a$.
               By Property \ref{prop1}(ii)(1), we get $\alpha_r+\cdots+\alpha_{t-1} = \alpha^E_{r+1}+\cdots+\alpha^E_t$.
               Since $a_{r+1}=\cdots=a_{t-1}=0$, we have $\alpha_i=c_i$, and $\alpha^E_i=n-c_i$ for each $i=r+1,\cdots,t-1$. Note that $\alpha^E_t=n-c_t$.
               It follows that
               $$\sum\limits_{i=r}^{t-1}c_i\le (n-c_{r+1})+\cdots+(n-c_t) =\alpha_r+c_{r+1}+\cdots+c_{t-1}<c_r+\cdots+c_{t-1}. $$
This is a contradiction. Therefore $a_r=0$.

For part ii), we have $\alpha_i=c_i$ and $\alpha^E_i=n-c_i$ for each $i=r+1,\dots,t-1$.
Assume to the contrary that $a_r=0$ holds for possible $\alpha$ and $a$.
Then $\alpha_r=c_r$ and $\alpha^E_r=n-c_r$.
               By Property \ref{prop1}(ii)(2), we get $\alpha_r+\cdots+\alpha_{t-1} \le \alpha^E_{r+1}+\cdots+\alpha^E_t$,
which is rewritten as
$$c_r+\cdots+c_{t-1} \le \alpha^E_{r+1}+\cdots+\alpha^E_t\le (n-c_{r+1})+\cdots+(n-c_t).$$
This contradicts the hypothesis. Therefore $a_r>0$. Then by Corollary \ref{cor:aEnd}, we should have
$\sum\limits_{i=r}^{t-1}\alpha_i = \sum\limits_{i=r+1}^{t}(n-\alpha_i)$. Solving for $\alpha_r$ gives
 $\alpha_r=\sum\limits_{i=r+1}^{t}(n-c_i)-\sum\limits_{i=r+1}^{t-1}c_i$. Then $a_r=c_r-\alpha_r$. Note that $\alpha^E_r$
 cannot be determined yet.

\end{proof}

\begin{exam}
For $c=(5,2,3,6,4,1,5,3)\in N(8,7)$, we can find the chain $C_{\alpha}$ with $c\in C_{\alpha}$, where $\alpha=(5,2,1,6,4,1,4,0)$, as follows in Figure \ref{fig:cToalpha}.
\begin{figure}[H]
\centering{
\includegraphics[height=2.3 in]{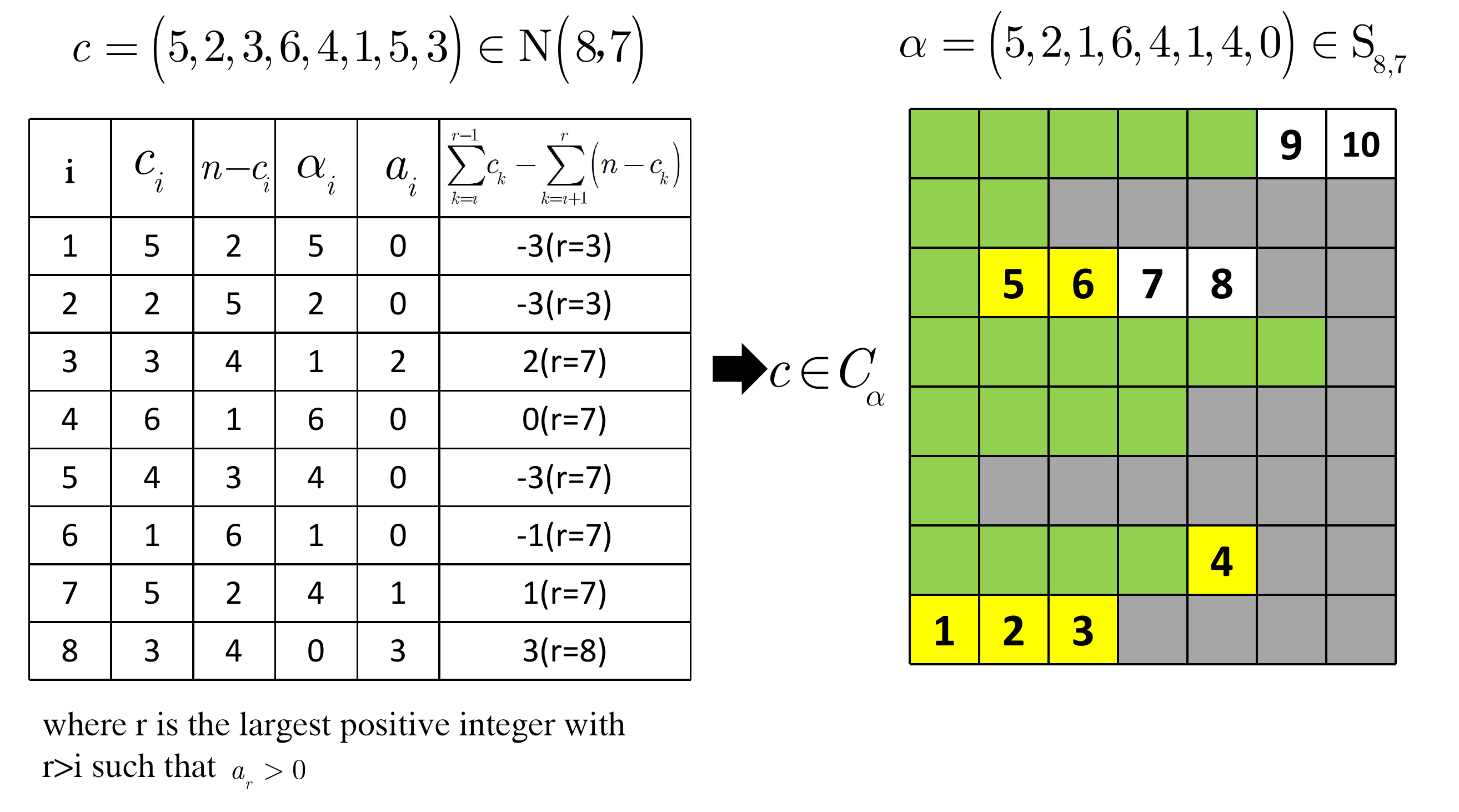}}
\caption{Computing $\alpha$ from $c$.\label{fig:cToalpha}}
\end{figure}

\end{exam}

\begin{exam}
               We give the symmetric chain decomposition of $N(3,2)$ by Algorithm \ref{alg-CTabCalpah} in Figure \ref{fig:N32} as follows.
\begin{figure}[!ht]
\centering{
\includegraphics[height=1.7 in]{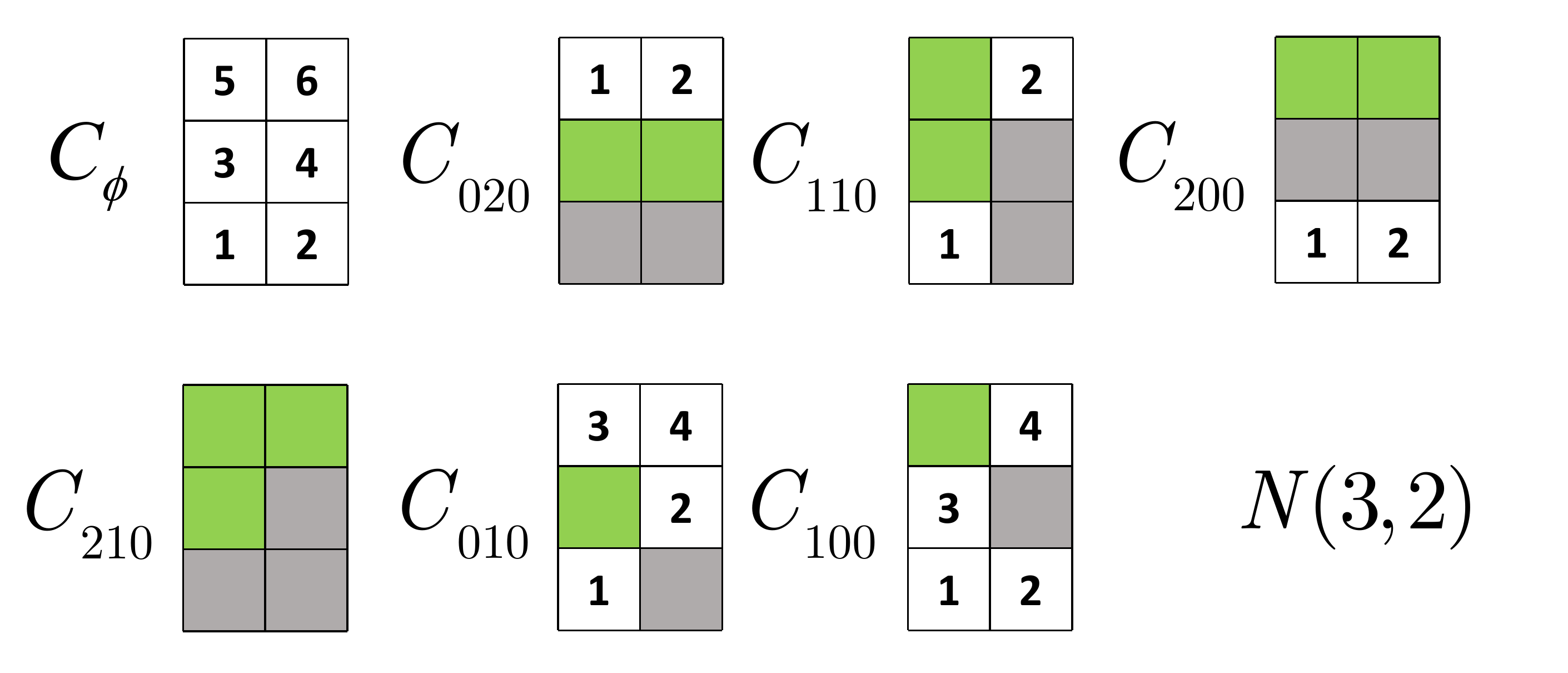}}
\caption{The symmetric chain decomposition of $N(3,2)$.\label{fig:N32}}
\end{figure}

\end{exam}

\section{An induced involution\label{sec:involution}}
There is a natural involution $*$ on $N(m,n)$ defined by $c^*=(n-c_m,n-c_{m-1},\dots, n-c_1)$.
It induces a map $\psi(S(C_\alpha))= (E(C_{\alpha}))^*$, or equivalently
$$ \psi(\alpha)= \alpha^E_{rev}:=(\alpha^E_m,\alpha^E_{m-1},\cdots,\alpha^E_2,0).$$

\begin{theo}\label{theo:involution}
The map $\psi$ is an involution on $S_{m,n}$. More precisely,
$C_{\psi(\alpha)}$ is obtained by rotating the tableau $C_\alpha$ 180 degree.
\end{theo}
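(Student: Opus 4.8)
The plan is to show directly that rotating the chain tableau $C_\alpha$ by $180^\circ$ produces a valid chain tableau, and that this rotated tableau is exactly $C_{\psi(\alpha)}$; the involutivity of $\psi$ then follows because the $180^\circ$ rotation is itself an involution. First I would set up notation: write $\rho$ for the $180^\circ$ rotation acting on an $m\times n$ tableau, so that the cell in row $i$, column $j$ is sent to row $m+1-i$, column $n+1-j$. The key observation is how $\rho$ acts on the three types of cells in $C_\alpha$ (fixed/green, forbidden/gray, fillable/numbered). A fixed cell for $\alpha$ in row $i$ is one of the leftmost $\alpha_i$ cells; under $\rho$ it becomes one of the rightmost $\alpha_i$ cells in row $m+1-i$. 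A forbidden cell for $\alpha$ in row $i$ is, by definition, one of the rightmost $\alpha^E_i$ cells of that row among the non-fixed cells; I want to show $\rho$ carries the forbidden cells of $C_\alpha$ onto the fixed cells of $C_{\psi(\alpha)}$ and vice versa, while fillable cells go to fillable cells.

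The first main step is a counting/positional lemma: in row $i$ of $C_\alpha$ the cells split, reading left to right, as $\alpha_i$ fixed cells, then $d^\alpha_i=(n-\alpha_i)-\alpha^E_i$ fillable cells, then $\alpha^E_i$ forbidden cells — i.e.\ in each row the forbidden cells form a right-justified block and the fixed cells a left-justified block, with the fillable cells sandwiched between. This needs a short argument from Algorithm~\ref{alg-CTabCalpah}: the forbidden cells for $\alpha_1,\dots,\alpha_{m-1}$ are always placed starting from the rightmost available cell of a row and moving left, and by Lemma~\ref{lem:splitting} the forbidden cells placed while processing $\alpha_{i-1}$ never spill past row $i+1$ into rows that already contain forbidden cells in a way that breaks right-justification; more carefully, the forbidden cells landing in row $i$ come from processing $\alpha_j$ for various $j<i$, always filling from the right, so their union is a right block. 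Granting this, $\rho$ sends the fixed block of row $i$ (length $\alpha_i$, left-justified) to a right-justified block of length $\alpha_i$ in row $m+1-i$, and sends the forbidden block of row $i$ (length $\alpha^E_i$, right-justified) to a left-justified block of length $\alpha^E_i$ in row $m+1-i$. Since $\psi(\alpha)=(\alpha^E_m,\dots,\alpha^E_2,0)$, the row $m+1-i$ fixed block of $C_{\psi(\alpha)}$ should have length $(\psi(\alpha))_{m+1-i}=\alpha^E_i$ (with the convention $\alpha^E_1=0$ matching $(\psi(\alpha))_m=0$), which is exactly what $\rho$ produces. So the green cells of $\rho(C_\alpha)$ and of $C_{\psi(\alpha)}$ coincide, hence (the complement being determined) so do the gray and numbered cells as sets.

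The second main step is to check the numbering agrees, not just the set of numbered cells. In $C_\alpha$ the fillable cells are filled with $1,2,3,\dots$ from left to right and bottom to top; under $\rho$ this reading order reverses to right-to-left and top-to-bottom, so $\rho$ turns the numbered tableau $C_\alpha$ into the \emph{reverse} numbering. I would argue that reversing the numbering of a chain tableau corresponds exactly to reading the associated saturated chain from top to bottom instead of bottom to top, i.e.\ it sends the chain $S(C_\alpha)<\cdots<E(C_\alpha)$ to the chain $(E(C_\alpha))^*<\cdots<(S(C_\alpha))^*$ after applying the componentwise complement $c\mapsto c^*$ built into $\rho$ — the point being that $\rho$ simultaneously reverses the fill order and complements every entry, and these two effects combine so that the $k$-th step of the rotated chain is the $*$-image of the $(\ell-k)$-th step of the original, where $\ell$ is the chain length. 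This identifies $\rho(C_\alpha)$, as a situated chain, with $C_{\psi(\alpha)}$: they have the same start point $S(C_{\psi(\alpha)})=\psi(\alpha)=(E(C_\alpha))^*$ by definition of $\psi$, the same underlying cell decomposition by step one, and the same increasing fill, hence they are literally equal. Finally, $\psi(\psi(\alpha))$ is obtained by rotating $C_{\psi(\alpha)}=\rho(C_\alpha)$ again by $180^\circ$, i.e.\ $\rho^2(C_\alpha)=C_\alpha$, whose start point is $\alpha$; thus $\psi$ is an involution, and in particular $\psi$ maps $S_{m,n}$ to $S_{m,n}$ since every $C_{\psi(\alpha)}$ is a genuine chain in the decomposition with start point in the starting set.

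The step I expect to be the main obstacle is step one — proving that in every row the forbidden cells of $C_\alpha$ form a right-justified block (equivalently, that the fillable cells are contiguous and adjacent to the fixed cells). Algorithm~\ref{alg-CTabCalpah} places forbidden cells by a somewhat intricate right-to-left, top-to-bottom sweep that is restarted for each $\alpha_i$, and one must rule out the possibility that a later sweep inserts a forbidden cell to the \emph{left} of a fillable cell in some row. I would handle this by an induction on $i$ tracking, for each row, the invariant ``fixed block on the left, then fillable, then forbidden on the right,'' using Corollary~\ref{cor:aEnd} and Lemma~\ref{lem:splitting} to control exactly which rows receive forbidden cells during the processing of each block $\alpha_{q_k},\dots,\alpha_{q_{k+1}-1}$: within a splitting interval the earlier $\alpha_j$ completely fill the rightmost $n-\alpha_{j+1}$ cells of row $j+1$ (so that row's forbidden block is a genuine right block of size $n-\alpha_j$, i.e.\ the row has no fillable cells), and only the last index $q_{k+1}-1$ of the interval contributes a partial right block to row $q_{k+1}$, which still abuts the right edge. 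Once this structural description is in hand, the rest of the proof is bookkeeping about the $180^\circ$ rotation.
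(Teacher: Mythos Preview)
Your geometric approach via the $180^\circ$ rotation $\rho$ is natural, and your structural observation in step one---that each row of $C_\alpha$ decomposes as a left-justified fixed block, then a fillable block, then a right-justified forbidden block---is correct and provable along the lines you sketch using Lemma~\ref{lem:splitting}. However, there is a genuine gap at the sentence ``So the green cells of $\rho(C_\alpha)$ and of $C_{\psi(\alpha)}$ coincide, hence (the complement being determined) so do the gray and numbered cells as sets.'' Matching green cells does \emph{not} force the gray cells to match: in row $m+1-i$ you have shown only that both tableaux have the pattern [green\,$\mid$\,white\,$\mid$\,gray] with green block of the same length $\alpha^E_i$, but the white/gray split is governed by $(\psi(\alpha))^E_{m+1-i}$ on the $C_{\psi(\alpha)}$ side and by $\alpha_i$ on the $\rho(C_\alpha)$ side, and you have not proved these agree. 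A related circularity is that you invoke $C_{\psi(\alpha)}$ throughout without first establishing $\psi(\alpha)\in S_{m,n}$; your closing remark that ``$\psi$ maps $S_{m,n}$ to $S_{m,n}$ since every $C_{\psi(\alpha)}$ is a genuine chain'' presupposes what it claims.

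Both gaps reduce to the same missing computation: one must show that the splitting-row indices of $\psi(\alpha)$ are precisely $m+1-q_s<\cdots<m+1-q_1$ (the reverses of those for $\alpha$), from which Corollary~\ref{cor:aEnd} yields $(\psi(\alpha))^E_{m+1-i}=\alpha_i$. This is exactly what the paper's proof supplies: it rewrites the inequality system from Corollary~\ref{cor:aEnd} characterising $\alpha\in S_{m,n}$ with splitting rows $q_1,\dots,q_s$ (formula~(3)), and shows by subtracting each strict inequality from the final equality that it is equivalent to the reversed system (formula~(4)), which is the characterisation of $\psi(\alpha)\in S_{m,n}$ with the reversed splitting rows. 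Once that algebraic equivalence is in hand, your rotation picture goes through and the involutivity is immediate from $\rho^2=\mathrm{id}$; but the inequality manipulation, not the row-structure lemma you flagged, is the real obstacle.
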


\begin{proof}
Suppose Algorithm \ref{alg-CTabCalpah} produces the tableau $C_{\alpha}$. Let $1=q_{1}<q_{2}<\cdots<q_{s}=m$ be all the indices of splitting rows of $C_{\alpha}$.
                 Define $\bar{q_k}:=m-q_{m-k+1}+1$, $k=1,2,\cdots,s$.
                 For any $k\in \{1,2,\cdots,s-1\}$, by Corollary \ref{cor:aEnd}, we obtain $\alpha^E_j=n-\alpha_j$ for each $j=q_{k}+1,q_{k}+2,\cdots,q_{k+1}-1$ and $\alpha^E_{q_{{k+1}}}=\sum\limits_{j=q_{k}}^{q_{k+1}-1}\alpha_j-\sum\limits_{j=q_{k}+1}^{q_{{k+1}}-1}\alpha^E_j$.

                 The reason that $\alpha^E_{rev} \in S_{m,n}$ is as follows:

                                         The condition of $\alpha \in S_{m,n}$ is equivalent to
\begin{align*}
                                             &\alpha_{q_k}+\alpha_{q_k}^E\le n, \quad 1\le k \le s;\\
                                           &\begin{cases}\label{formula3}\tag{3}
                                                                                      \alpha_{q_k} > \alpha^E_{{q_k}+1},\\
                                           \alpha_{q_k} + \alpha_{q_k+1} > \alpha^E_{{q_k}+1} + \alpha^E_{{q_k}+2},\\
                                           \qquad \qquad \vdots\\
                                           \alpha_{q_k} + \alpha_{q_k+1} +\cdots + \alpha_{q_{k+1}-2} > \alpha^E_{{q_k}+1} + \alpha^E_{{q_k}+2} + \cdots + \alpha^E_{q_{k+1}-1},\\
                                           \alpha_{q_k} + \alpha_{q_k+1} +\cdots + \alpha_{q_{k+1}-1} = \alpha^E_{{q_k}+1} + \alpha^E_{{q_k}+2} + \cdots + \alpha^E_{q_{k+1}}.\\
                                           \end{cases}    1\le k \le s-1;
\end{align*}

                 The above formula (\ref{formula3}) holds if and only if the formula
\begin{align*}
                                           &\alpha_{q_k}^E+\alpha_{q_k}\le n, \quad 1\le k \le s;\\
                                           &\begin{cases}\label{formula4}\tag{4}
                                           \alpha^E_{q_{k+1}} > \alpha_{q_{k+1}-1},\\
                                           \alpha^E_{q_{k+1}} + \alpha^E_{q_{k+1}-1} > \alpha_{q_{k+1}-1} + \alpha_{q_{k+1}-2},\\
                                           \qquad \qquad \vdots\\
                                           \alpha^E_{q_{k+1}} + \alpha^E_{q_{k+1}-1} + \cdots + \alpha^E_{{q_k}} > \alpha_{q_{k+1}-1} + \alpha_{q_{k+1}-2} + \cdots + \alpha_{q_k-1},\\
                                           \alpha^E_{q_{k+1}} + \alpha^E_{q_{k+1}-1} + \cdots + \alpha^E_{{q_k}+1} = \alpha_{q_{k+1}-1} + \alpha_{q_{k+1}-2} + \cdots + \alpha_{q_k}.\\
                                           \end{cases} 1\le k \le s-1;
\end{align*}
                 holds. The above formula (\ref{formula4}) holds if and only if the formula
\begin{align*}
                                           &(\alpha_{rev})_{\bar{q_k}}+(\alpha_{rev})_{\bar{q_k}}^E\le n, \quad 1\le k \le s;\\
                                           &\begin{cases}
                                            (\alpha_{rev})_{\bar{q_k}} >(\alpha_{rev})^E_{{\bar{q_k}}+1},\\
                                           (\alpha_{rev})_{\bar{q_k}} + (\alpha_{rev})_{\bar{q_k}+1} > (\alpha_{rev})^E_{\bar{{q_k}}+1} + (\alpha_{rev})^E_{\bar{{q_k}}+2},\\
                                           \qquad \qquad \vdots\\
                                           (\alpha_{rev})_{\bar{q_k}} + (\alpha_{rev})_{\bar{q_k}+1} +\cdots + (\alpha_{rev})_{\bar{q_{k+1}}-2} > (\alpha_{rev})^E_{{\bar{q_k}}+1} + (\alpha_{rev})^E_{{\bar{q_k}}+2} + \cdots + (\alpha_{rev})^E_{\bar{q_{k+1}}-1},\\
                                           (\alpha_{rev})_{\bar{q_k}} + (\alpha_{rev})_{\bar{q_k}+1} +\cdots + (\alpha_{rev})_{\bar{q_{k+1}}-1} = (\alpha_{rev})^E_{\bar{{q_k}}+1} + (\alpha_{rev})^E_{{\bar{q_k}}+2} + \cdots + (\alpha_{rev})^E_{\bar{q_{k+1}}},\\
                                           \text{where $1\le k \le s-1$.}
                                           \end{cases}
\end{align*}
 holds. This is equivalent to $\psi(\alpha)=\alpha^E_{rev} \in S_{m,n}$.
\end{proof}

\begin{exam}
                 We give an example for Theorem \ref{theo:involution}.
                 \begin{figure}[H]
                 \centering{
                 \includegraphics[height=3 in]{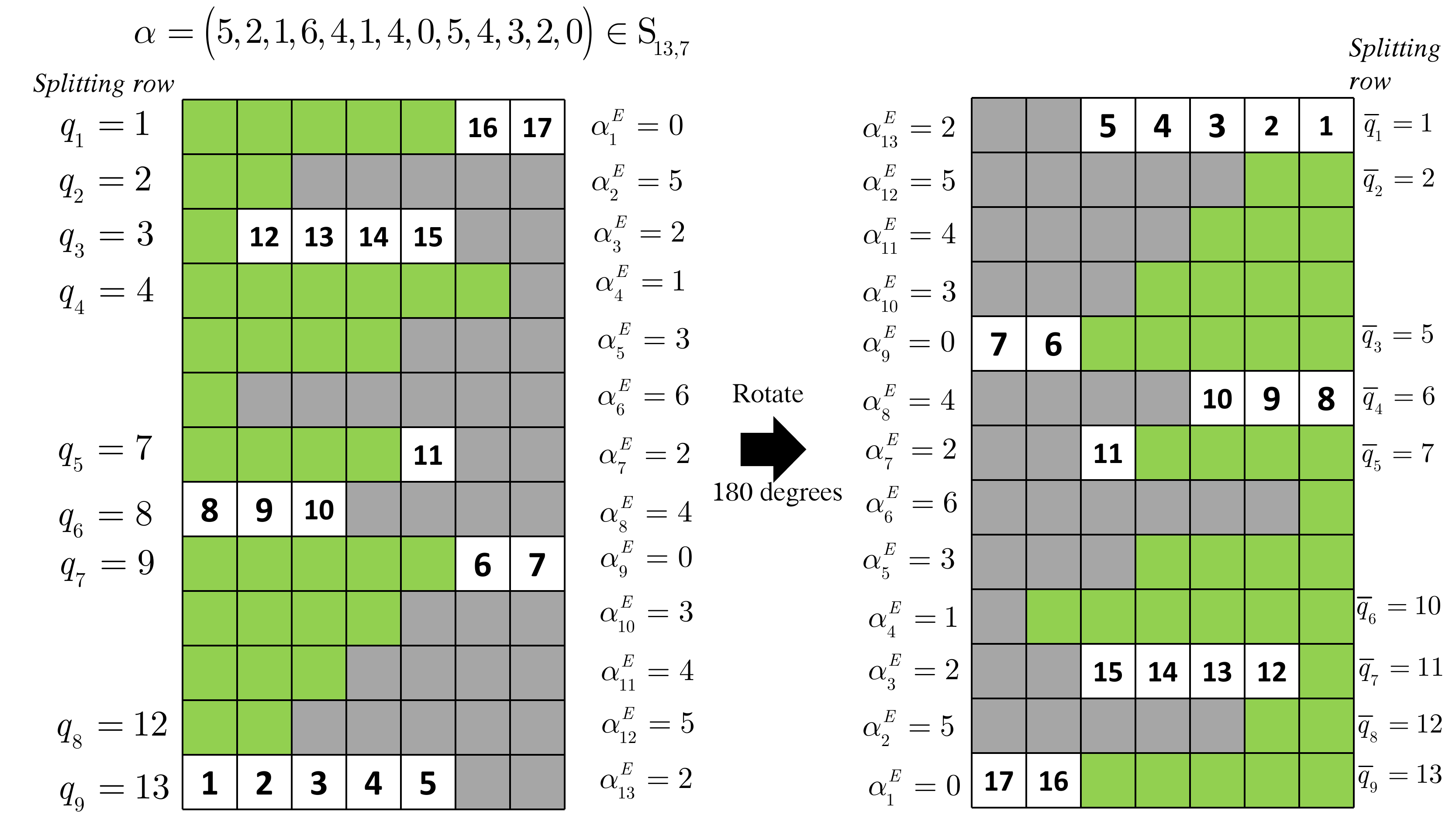}}
                 \caption{Obtaining the $\alpha^E_{rev} \in S_{m,n}$ from $\alpha \in S_{m,n}$ .\label{exa:alphaEndSpoint}}
                 \end{figure}

                 \begin{figure}[H]
                 \centering{
                 \includegraphics[height=3 in]{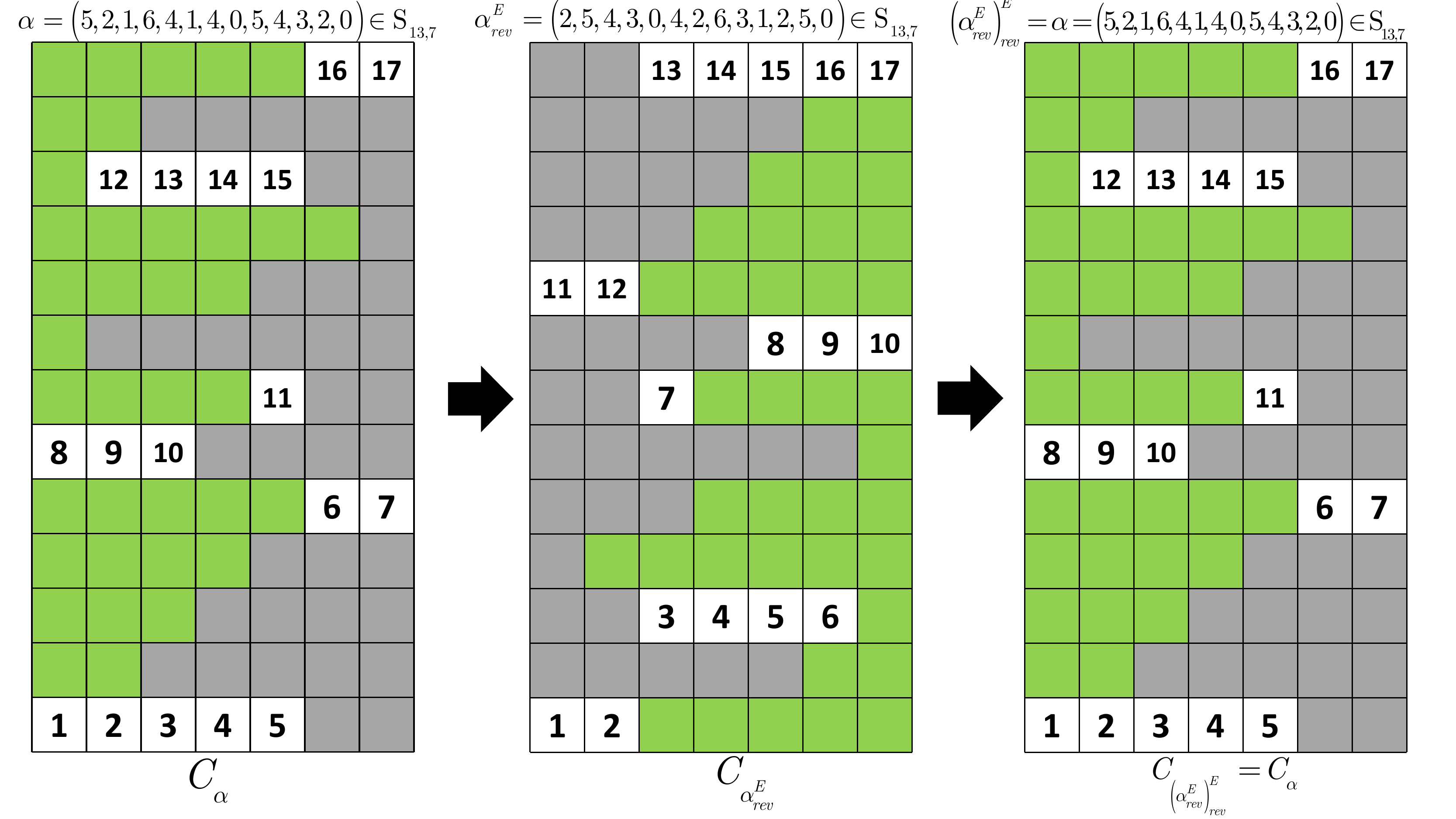}}
                 \caption{$C_{\alpha}=C_{\psi^2(\alpha)}$.\label{exa:alphaInvolution}}
                 \end{figure}
\end{exam}

\textbf{Acknowledgements:} The author would like to thank Professor Guoce Xin for helpful discussions.


\begin{thebibliography}{2}
\bibitem{DeBruijn}
N.G. de Bruijn, Ca. van Ebbenhorst Tengbergen, D. Kruyswijk. On the set of divisors of a number. Nieuw Arch. Wiskunde, 191--193, 23(1951).

\bibitem{Dhand-necklacePosets}
V. Dhand. Symmetric chain decomposition of necklace posets. Electron. J. Combin., no.1, Paper 26, 14 pp, 19(2012).


\bibitem{Hersh-Boolean}
P. Hersh and A. Schilling. Symmetric chain decomposition for cyclic quotients of Boolean algebras and relation to cyclic crystals. (English summary)
Int. Math. Res. Not. IMRN, no.2, 463--473, (2013).

\bibitem{Jordan-necklacePoset}
K.K. Jordan. The necklace poset is a symmetric chain order. J. Combin. Theory Ser. A, no.6, 625--641, 117(2010).

\bibitem{B.Lindstrom}
B. Lindstr\"{o}m. A partition of $L(3,n)$ into saturated chains. European J.Combin., 61--631, (1980).

\bibitem{Lubell}
D. Lubell. A short proof of Sperner's lemma. J. Combin. Theory Ser., 299--299, 1(1996).

\bibitem{Muhle-noncrossingPartition}
H. M$\ddot{u}$hle. Symmetric decompositions and the strong Sperner property for noncrossing partition lattices. J. Algebraic Combin., no.3, 745--775, 45(2017).

\bibitem{GuoceXin}
G. Xin and Y. Zhong. An explicit order matching for $L(3,n)$ from several approaches and its extension for $L(4,n)$. arXiv preprint arXiv: 2104.11003, (2021).

\bibitem{Nathan}
Douglas B. West. A Symmetric chain decomposition of $L(4,n)$. European J. Combin., 379--383, 1(1980).

\bibitem{Wen}
X. Wen. Computer-generated symmetric chain decompositions for $L(4, n)$ and $L(3, n)$. Advances in
Applied Mathematics, 33(2): 409--412, (2004).

\end{thebibliography}
\end{document}